\theoremstyle{plain}
\newtheorem{theorem}{Theorem}[section]
\newtheorem{lemma}[theorem]{Lemma}
\newtheorem{proposition}[theorem]{Proposition}
\theoremstyle{definition}
\newtheorem{rem}[theorem]{Remark}
\DeclareMathOperator{\aut}{Aut}
\newcommand{\bb}[1]{\mathbb{#1}}
\newcommand{\stab}{\mathrm{Stab}}
\newcommand{\im}{\mathrm{im}}
\def\Z{{\mathbb{Z}}}
\def\R{{\mathbb{R}}}
\def\C{{\mathbb{C}}}
\def\ps@pprintTitle{%
  \let\@oddhead\@empty
  \let\@evenhead\@empty
  \let\@oddfoot\@empty
  \let\@evenfoot\@oddfoot
}
\title{Smooth quotients of abelian surfaces by finite groups that fix the origin}
\author{Robert Auffarth}
\address{R. Auffarth \\Departamento de Matem\'aticas, Facultad de
Ciencias, Universidad de Chile\\ Las Palmeras 3425, \~Nu\~noa, Santiago, Chile}
\email{rfauffar@uchile.cl}
\author{Giancarlo Lucchini Arteche}
\address{G. Lucchini Arteche \\Departamento de Matem\'aticas, Facultad de
Ciencias, Universidad de Chile\\ Las Palmeras 3425, \~Nu\~noa, Santiago, Chile}
\email{giancarlo.lucchini-arteche@polytechnique.edu}
\author{Pablo Quezada}
\address{P. Quezada \\Facultad de Matem\'aticas, Pontificia Universidad Cat\'olica\\ Vicu\~na Mackenna 4860, Macul, Santiago, Chile}
\email{psquezada@uc.cl}
\thanks{The first and third authors were partially supported by CONICYT PIA ACT1415. The second author was partially supported by Fondecyt Grant 11170016 and PAI Grant 79170034.}
\keywords{Abelian surfaces, automorphisms}
\begin{document}

\begin{abstract}
Let $A$ be an abelian surface and let $G$ be a finite group of automorphisms of $A$ fixing the origin. Assume that the analytic re\-pre\-sen\-ta\-tion of $G$ is irreducible. We give a classification of the pairs $(A,G)$ such that the quotient $A/G$ is smooth. In particular, we prove that $A=E^2$ with $E$ an elliptic curve and that $A/G\simeq\bb P^2$ in all cases. Moreover, for fixed $E$, there are only finitely many pairs $(E^2,G)$ up to isomorphism. This fills a small gap in the literature and
completes the classification of smooth quotients of abelian varieties by finite groups fixing the origin started by the first two authors.\\

\noindent\textbf{MSC codes:} 14L30, 14K99.
\end{abstract}

\maketitle

\section{Introduction}

The purpose of this paper is to give a complete classification of all smooth quotients of abelian surfaces by finite groups that fix the origin, and is to be seen as the completion of the classification given in \cite{ALA} of smooth quotients of abelian varieties that fix the origin. This kind of quotients of abelian surfaces has already been studied by Tokunaga and Yoshida in \cite{TY}, where infinite 2-dimensional complex reflection groups, which are extensions of a finite complex reflection group $G$ by a $G$-invariant lattice, are classified. However, these do not cover all possible $G$-invariant lattices and hence not all possible group actions on abelian surfaces. Moreover, there seem to be some complex reflection groups that the authors missed, as can be seen by looking at Popov's classification of the same groups in \cite{Popov}.

The techniques used in this paper are similar, but not exactly the same, to the methods used in \cite{ALA}. Indeed, the ideas used in this last paper have been modified in order to apply them to the two-dimensional case. Moreover our approach is far different from that used in \cite{TY}. 

Our main theorem states the following:

\begin{theorem}\label{classification}
Let $A$ be an abelian surface and let $G$ be a (non-trivial) finite group of automorphisms of $A$ that fix the origin. Then the following conditions are equivalent:

\begin{itemize}
\item[(1)] $A/G$ is smooth and the analytic representation of $G$ is irreducible.
\item[(2)] $A/G\simeq\mathbb{P}^2$.
\item[(3)] There exists an elliptic curve $E$ such that $A\simeq E^2$ and $(A,G)$ satisfies exactly one of the following:
\begin{enumerate}[label=(\alph*)]
\item $G\simeq C^2\rtimes S_2$ where $C$ is a non-trivial (cyclic) subgroup of automorphisms of $E$ that fix the origin; here the action of $C^2$ is coordinatewise and $S_2$ permutes the coordinates.\label{ex1}
\item $G\simeq S_{3}$ and acts on 
\[A\simeq\{(x_1,x_2,x_3)\in E^{3}:x_1+x_2+x_3=0\},\] 
by permutations.\label{ex2}
\item $E=\C/\Z[i]$ and $G$ is the order 16 subgroup of $\mathrm{GL}_2(\Z[i])$ generated by:
\[\left\{\begin{pmatrix} -1 & 1+i \\ 0 & 1\end{pmatrix}\right.,\, \begin{pmatrix} -i & i-1 \\ 0 & i\end{pmatrix},\, \left.\begin{pmatrix} -1 & 0 \\ i-1 & 1\end{pmatrix} \right\},\]
acting on $A\simeq E^2$ in the obvious way. \label{ex3}
\end{enumerate}
\end{itemize}
\end{theorem}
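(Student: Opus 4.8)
I would establish the theorem by proving the chain of implications $(1)\Rightarrow(3)$, $(3)\Rightarrow(2)$ and $(2)\Rightarrow(1)$, with the bulk of the work in the first.

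\textbf{$(1)\Rightarrow(3)$.} The starting point is the local structure theory of smooth quotients: since $A/G$ is smooth, applying the Chevalley--Shephard--Todd theorem to the action of the stabilizer of each point on the corresponding tangent space (as in \cite{ALA}) shows that every such stabilizer, and in particular $G=\stab(0)$, acts on its tangent space through a group generated by pseudo-reflections. As $G$ acts faithfully on $A$ and $\rho_a$ is assumed irreducible, $\rho_a$ realizes $G$ as an irreducible rank-two complex reflection group, hence one of the groups in the Shephard--Todd classification: a dihedral group $I_2(m)$, an imprimitive group $G(m,p,2)$, or a primitive exceptional group among $G_4,\dots,G_{22}$. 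The second ingredient is an integrality constraint imposed by the lattice $\Lambda:=H_1(A,\Z)$: the rational representation $\rho_r$, whose complexification is $\rho_a\oplus\bar\rho_a$, must be realizable over $\Q$ and must admit $\Lambda$ as a $G$-stable lattice. In particular the character field $\Q(\chi_{\rho_a})$ must be $\Q$ or an imaginary quadratic field $K$, which eliminates all but finitely many groups from the list and forces the endomorphisms of $A$ realizing $\rho_a$ to lie in an order of $K$ (so that $E$ will carry complex multiplication exactly in the cases with $K\neq\Q$).

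For each of the surviving groups $G$ I would then enumerate the $G$-stable lattices $\Lambda\subset\C^2$ up to isomorphism of the pair $(A,G)$: decomposing $\Lambda$ over the (small) ring generated by $G$ exhibits it as built out of fractional ideals of the relevant quadratic order, of which there are finitely many classes; this yields both the finiteness statement and, after averaging a polarization to make $A$ abelian, the isomorphism $A\cong E^2$ in every case. One must moreover impose the remaining part of smoothness — that the stabilizer of \emph{every} point of $A$, not only of the origin, acts as a reflection group on its tangent space — which discards the pairs that fail to give a smooth quotient. Matching the pairs that survive both tests to the families \ref{ex1}, \ref{ex2}, \ref{ex3}, and checking that the enumeration is exhaustive, finishes this direction. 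I expect this case analysis — in particular eliminating the spurious groups and lattices, and isolating the single order-$16$ lattice over $\Z[i]$ of \ref{ex3} among the $G(4,2,2)$-lattices — to be the main obstacle.

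\textbf{$(3)\Rightarrow(2)$.} Here I would compute the three quotients directly. For \ref{ex1}, $E^2/C^2\cong(E/C)^2\cong\bb{P}^1\times\bb{P}^1$ since $E/C\cong\bb{P}^1$, and the residual action of $S_2=G/C^2$ is the coordinate swap, so $A/G\cong(\bb{P}^1\times\bb{P}^1)/S_2\cong\operatorname{Sym}^2\bb{P}^1\cong\bb{P}^2$. For \ref{ex2}, the summation map $\operatorname{Sym}^3 E\to\pic^3(E)\cong E$ is a $\bb{P}^2$-bundle (its fibre over a line bundle $L$ is the linear system $|L|\cong\bb{P}^2$) and $A/S_3$ is identified with its fibre over $[\mathcal O(3\cdot 0_E)]$, hence $\cong\bb{P}^2$. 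For \ref{ex3} I would first confirm smoothness via the reflection criterion above and then identify $A/G$ with $\bb{P}^2$ by a numerical computation — for instance showing it is a del Pezzo surface with $K_{A/G}^2=9$ by computing $K_{A/G}^2=\tfrac1{16}R^2$ from the ramification divisor $R$ of the degree-$16$ cover $A\to A/G$ (together with $p_g=q=0$) — possibly supplemented by exhibiting a polynomial generating set for the associated ring of $G$-invariant sections.

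\textbf{$(2)\Rightarrow(1)$.} If $A/G\cong\bb{P}^2$ then it is smooth, so only the irreducibility of $\rho_a$ requires proof. Smoothness still supplies the pseudo-reflection structure above; if in addition $\rho_a$ were reducible, this forces $G$ to be a product $K_1\times K_2$ of cyclic groups acting through scalars on the two invariant lines of $\C^2$. Then $A$ is $G$-equivariantly isogenous to a product of elliptic curves on which $G$ acts coordinatewise (the borderline case, where no $G$-stable subtorus exists, corresponds to $A$ carrying quartic complex multiplication). In the former situation $A/G$ admits a finite surjective morphism onto a product $A_1/K\times A_2/K$, each factor being $\bb{P}^1$ or a curve of positive genus, whence a nonconstant morphism from $A/G$ to a curve — impossible, as every morphism from $\bb{P}^2$ to a curve is constant; and the borderline case is excluded by a Hodge-theoretic computation showing $\NS(A/G)$ would have rank $\geq 2$, contradicting $\rho(\bb{P}^2)=1$.
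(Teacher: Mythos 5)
Your overall architecture ($(1)\Rightarrow(3)\Rightarrow(2)\Rightarrow(1)$, Chevalley--Shephard--Todd reducing $G$ to the rank-two Shephard--Todd list, an integrality/crystallographic constraint cutting down the infinite family, then a lattice classification checked against pointwise smoothness) is the same as the paper's. The problem is that for $(1)\Rightarrow(3)$ you have written a plan rather than a proof: the entire content of the theorem lives in the steps you defer. Concretely, (i) the character-field constraint does \emph{not} dispose of the exceptional groups $G_4,\dots,G_{22}$ --- several of them have imaginary quadratic character fields and are genuinely crystallographic, so they act on abelian surfaces and must be eliminated by showing some stabilizer is not generated by pseudoreflections (the paper outsources this to \cite[\S 3.3]{ALA}); (ii) ``enumerate the $G$-stable lattices via fractional ideals and discard the pairs that fail smoothness'' gives no mechanism for actually deciding which pairs fail. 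The paper's mechanism is concrete: it builds a $G$-isogeny $B=E_1\times E_2\to A$ with kernel $\Delta$, proves (Lemma \ref{lemma Delta diagonal or hyper}) that $\Delta$ is a small torsion group, lists the finitely many $\Delta$'s for each $G(m,p)$ with $m\in\{2,3,4,6\}$, and for each one either exhibits an explicit point of $B$ whose stabilizer in $\Delta\rtimes G$ is not generated by pseudoreflections (using Lemma \ref{lem pseudoref en DeltaG} to identify all pseudoreflections of $\Delta\rtimes G$), or conjugates by an explicit matrix to show $(A,G)\simeq(B,G)$, or matches the pair to one of the three examples. Without carrying out this analysis (or an equivalent one) you have not proved the implication, and it is exactly here that the single new case, Example (c), is isolated.

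Where you genuinely diverge from the paper is in $(3)\Rightarrow(2)$. For Examples (a) and (b) your direct computations ($\mathrm{Sym}^2\bb P^1\simeq\bb P^2$ and the fibre of $\mathrm{Sym}^3E\to\pic^3(E)$) are correct and are essentially the arguments of \cite{Auff}, which the paper cites. For Example (c) the paper exhibits $A/G\to\bb P^2$ as a normal $(\Z/2\Z)^2$-cover branched over three lines in general position and invokes Pardini's uniqueness of abelian covers to identify it with $[x:y:z]\mapsto[x^2:y^2:z^2]$; this simultaneously yields smoothness and the isomorphism with $\bb P^2$. Your alternative (prove smoothness by the reflection criterion, then compute $K^2=\frac{1}{16}R^2=9$ and $p_g=q=0$, hence a degree-$9$ del Pezzo) can be made to work --- $P_2=0$ follows from $\pi^*K_{A/G}=-R$, Castelnuovo gives rationality, and a rational surface with $K^2=9$ is $\bb P^2$ --- but it is not cheaper: the smoothness verification is itself a full stabilizer computation that the Pardini route avoids, and the computation of $R$ requires the same fixed-locus analysis the paper performs to find the branch divisors $D_s$, $D_{t_0^*}$, $D_{st_0^*}$. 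Your $(2)\Rightarrow(1)$ via $\mathrm{NS}(A/G)_\Q\simeq(\mathrm{NS}(A)_\Q)^G$ having rank $\geq 2$ in the reducible case is a valid expansion of what the paper dismisses as obvious (via the remark that reducible actions yield $\bb P^1\times\bb P^1$ or a bielliptic surface).
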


The first two cases found in item $(3)$ of the above theorem were studied in detail in \cite{Auff} (in arbitrary dimension), where it was proven that both examples give the projective plane as a quotient. Throughout the paper we will refer to these two examples as Example \ref{ex1} and Example \ref{ex2}, respectively. The equivalent assertion for Example \ref{ex3} is Proposition \ref{prop ex3 smooth} in this paper. Note that, aside from Examples \ref{ex1} and \ref{ex2} which belong to infinite families, Example \ref{ex3} is the only new case of an action of $G$ on an abelian variety satisfying condition (1) from Theorem \ref{classification}, cf.~\cite[Thm.~1.1]{ALA}.

\begin{rem}
If $A/G$ is smooth and the analytic representation of $G$ is \emph{reducible}, then the results in \cite{ALA} imply that $A$ is isogenous to a product of two elliptic curves. The quotient is then either $\bb P^1\times\bb P^1$ (in which case $A=E_1\times E_2$) or a bielliptic surface.
\end{rem}

In \cite{Yoshi}, Yoshihara introduces the notion of a Galois embedding of a smooth projective variety. If $X$ is a smooth projective variety of dimension $n$ and $D$ is a very ample divisor that induces an embedding $X\hookrightarrow\mathbb{P}^N$, then the embedding is said to be \emph{Galois} if there exists an $(N-n-1)$-dimensional linear subspace $W$ of $\mathbb{P}^N$ such that $X\cap W=\varnothing$ and the restriction of the linear projection $\pi_W:\mathbb{P}^N\dashrightarrow\mathbb{P}^n$ to $X$ is Galois. Yoshihara specifically studies when abelian surfaces have a Galois embedding. He gives a classification of abelian surfaces having a Galois embedding, along with their Galois groups, and proves that after taking the quotient of the original abelian variety by the translations of the Galois group, the abelian variety must be isomorphic to the self-product of an elliptic curve. Unfortunately, his results were incomplete since they depended on a classification of smooth quotients like the one given in this paper, which Yoshihara attributed to Tokunaga and Yoshida in \cite{TY}. But as stated before, Tokunaga and Yoshida's results do not imply such a classification. Nevertheless, we can now safely say, thanks to Theorem \ref{classification}, that Yoshihara's results remain correct.\\

The structure of this paper is as follows: in Section \ref{sec group actions} we fix notations and give some preliminary results that will be needed in the proofs of Theorem \ref{classification}. The implication $(2)\Rightarrow (1)$ is obvious and $(3)\Rightarrow (2)$ was already treated in \cite{Auff} in the case of Examples \ref{ex1} and \ref{ex2}. Thus, we are mainly concerned with $(1)\Rightarrow (3)$, which we treat in Section \ref{sec reflectiongroup}. Finally, in Section \ref{sec ex3} we treat $(3)\Rightarrow(2)$ for Example \ref{ex3}, which is a construction of a different nature that only exists in the 2-dimensional case.

\section{Preliminaries on group actions on abelian varieties}\label{sec group actions}

We recall here some elementary results that were proved in \cite{ALA}. Let $A$ be an abelian surface and let $G$ be a group of automorphisms of $A$ that fix the origin, such that the quotient variety $A/G$ is smooth. By the Chevalley-Shephard-Todd Theorem, the stabilizer in $G$ of each point in $A$ must be generated by pseudoreflections; that is, elements that fix pointwise a divisor (i.e.~a curve) containing the point. In particular, $G=\stab_G(0)$ is generated by pseudoreflections and $G$ acts on the tangent space at the origin $T_0(A)$ (this is the analytic representation). In this context, a pseudoreflection is an element that fixes a line pointwise. We will often abuse notation and display $G$ as either acting on $A$ or $T_0(A)$; it will be clear from the context which action we are considering. 

In what follows, let $\mathcal{L}$ be a fixed $G$-invariant polarization on $A$ (take the pullback of an ample class on $A/G$, for example). For $\sigma$ a pseudoreflection in $G$ of order $r$, define
\begin{align*}
D_\sigma&:=\im(1+\sigma+\cdots+\sigma^{r-1}),\\
E_\sigma&:=\im(1-\sigma).
\end{align*}
These are both abelian subvarieties of $A$. The following result corresponds to \cite[Lem.~2.1]{ALA}.

\begin{lemma}\label{lemma Esigma Dsigma}
We have the following:
\begin{itemize}
\item[1.] $D_\sigma$ is the connected component of $\ker(1-\sigma)$ that contains 0 and $E_\sigma$ is the complementary abelian subvariety of $D_\sigma$ with respect to $\mathcal{L}$. In particular, $D_\sigma$ and $E_\sigma$ are elliptic curves.
\item[2.] $F_\sigma:=D_\sigma\cap E_\sigma$ consists of $2$-torsion points for $r=2,4$, of $3$-torsion points for $r=3$ and $D_\sigma\cap E_\sigma=0$ for $r=6$.
\end{itemize}
\end{lemma}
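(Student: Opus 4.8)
The plan is to reduce everything to the action of $\sigma$ on the single elliptic curve $E_\sigma$, where part (2) becomes an elementary computation in $\End(E_\sigma)$; part (1) is exactly \cite[Lem.~2.1]{ALA} and I would simply quote it.

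First I would pin down the analytic representation of $\sigma$. Being a pseudoreflection of order $r$ (and $\sigma\neq\mathrm{id}$), its eigenvalues on $T_0(A)$ are $1$ and a primitive $r$-th root of unity $\zeta$; since the rational representation lies in $\mathrm{GL}_4(\Z)$, its characteristic polynomial $(x-1)^2(x-\zeta)(x-\bar\zeta)$ has integer coefficients, so $\zeta+\bar\zeta\in\Z\cap[-2,2]$, which forces $r\in\{2,3,4,6\}$ — exactly the cases in the statement. On $T_0(A)$ the operator $1-\sigma$ annihilates the $1$-eigenspace and is invertible on the $\zeta$-eigenspace, while $1+\sigma+\cdots+\sigma^{r-1}$ does the opposite; combined with part (1) this identifies $T_0(D_\sigma)$ with the fixed line and $T_0(E_\sigma)$ with the $\zeta$-eigenline.

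Next I would record that $\sigma$ preserves both subvarieties. From $\sigma\circ(1+\sigma+\cdots+\sigma^{r-1})=1+\sigma+\cdots+\sigma^{r-1}$ it follows that $\sigma$ restricts to the identity on $D_\sigma=\im(1+\sigma+\cdots+\sigma^{r-1})$, and since $\sigma$ commutes with $1-\sigma$ it restricts to an automorphism of $E_\sigma=\im(1-\sigma)$ whose analytic representation is multiplication by $\zeta$. In particular $1-\sigma$ restricts to an isogeny $1-\sigma|_{E_\sigma}$ of $E_\sigma$, nonzero because its analytic representation $1-\zeta$ is nonzero (a nonzero endomorphism of an elliptic curve is an isogeny). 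Then for $x\in F_\sigma=D_\sigma\cap E_\sigma$: from $x\in D_\sigma$ we get $\sigma(x)=x$, and since $x\in E_\sigma$ this says precisely that $x\in\ker(1-\sigma|_{E_\sigma})$.

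Finally I would kill $x$ by an integer using a factorization in $\End(E_\sigma)$, where $\sigma|_{E_\sigma}$ satisfies the minimal polynomial of $\zeta$ (any endomorphism of an elliptic curve with vanishing analytic representation is $0$, so e.g. $1+\sigma+\sigma^2=0$ on $E_\sigma$ when $r=3$, and $\sigma^2-\sigma+1=0$ on $E_\sigma$ when $r=6$). This yields, in $\End(E_\sigma)$: $1-\sigma=2$ if $r=2$; $(1-\sigma)(1+\sigma)=2$ if $r=4$; $(1-\sigma)(1-\sigma^2)=3$ if $r=3$; and $(1-\sigma)\sigma=1$ if $r=6$. Evaluating at $x$ and using $(1-\sigma|_{E_\sigma})(x)=0$ gives $2x=0$ for $r=2,4$, gives $3x=0$ for $r=3$, and gives $x=0$ for $r=6$, which is the assertion. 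The case-by-case factorizations are routine; I do not expect a genuine obstacle, the only point to state carefully being that $F_\sigma$ is merely \emph{contained} in $\ker(1-\sigma|_{E_\sigma})$ and need not equal it — the inclusion is all that is used.
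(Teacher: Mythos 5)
Your argument is correct: reducing $F_\sigma$ to a subgroup of $\ker(1-\sigma|_{E_\sigma})$ via the fact that $\sigma$ acts as the identity on $D_\sigma$ and as $\zeta$ on $E_\sigma$, and then using the identities $1-\sigma=2$, $(1+\sigma)(1-\sigma)=2$, $(1-\sigma^2)(1-\sigma)=3$ and $\sigma(1-\sigma)=1$ in $\End(E_\sigma)$ for $r=2,4,3,6$ respectively, all checks out (including the justification that a relation among the $\sigma^k$ holds in $\End(E_\sigma)$ as soon as it holds for the analytic representation). The paper gives no proof of this lemma, citing \cite[Lem.~2.1]{ALA} instead, and your computation is essentially the standard argument supplied there, so there is nothing substantive to compare.
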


We will consider now a new abelian surface $B$ equipped with a $G$-equivariant isogeny to $A$, which we will call $G$-isogeny from now on. Let $\Lambda_A$ denote the lattice in $\C^2$ such that $A=\C^2/\Lambda_A$. Let $\Lambda_B\subseteq\Lambda_A$ be a $G$-invariant sublattice, and let $B:=C^2/\Lambda_B$ be the induced abelian surface, along with the $G$-isogeny
\[\pi:B\to A,\]
whose analytic representation is the identity. Note that this implies that $\sigma\in G$ is a pseudoreflection of $B$ if and only if it is a pseudoreflection of $A$. We may then consider the subvarieties $E_\sigma,D_\sigma,F_\sigma\subset A$ defined as above, which we will denote by $E_{\sigma,A},D_{\sigma,A}$ and $F_{\sigma,A}$. We do similarly for $B$. Note that, by definition, $\pi$ sends $E_{\sigma,B}$ to $E_{\sigma,A}$ and $D_{\sigma,B}$ to $D_{\sigma,A}$, hence $F_{\sigma,B}$ to $F_{\sigma,A}$. The following result was proved in \cite[Prop.~2.4]{ALA}

\begin{proposition}\label{prop Delta and D}
Let $\sigma\in G$ be a pseudoreflection and let $L$ be the line defining both $E_{\sigma,A}$ and $E_{\sigma,B}$. Assume that the map $F_{\sigma,B}\to F_{\sigma,A}$ is surjective and that $\Lambda_A\cap L=\Lambda_B\cap L$. Then $\ker(\pi)$ is contained in $D_{\tau\sigma\tau^{-1},B}$ for \emph{every} $\tau\in G$.\qed
\end{proposition}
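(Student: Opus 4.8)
The plan is to prove first that $\ker(\pi)\subseteq D_{\sigma,B}$ (the case $\tau=1$) and then to deduce the general statement by a short $G$-equivariance argument. Throughout I identify $\ker(\pi)$ with the finite subgroup $\Lambda_A/\Lambda_B$ of $B=\C^2/\Lambda_B$; since $\Lambda_B$ is $G$-invariant and $\pi$ is $G$-equivariant, this is a $G$-stable subgroup of $B$.

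The key preliminary observation is that the hypothesis $\Lambda_A\cap L=\Lambda_B\cap L$ says exactly that $\pi$ restricts to an \emph{isomorphism} from $E_{\sigma,B}$ onto $E_{\sigma,A}$. Indeed, because the analytic representation of $\pi$ is the identity, $\sigma$ acts on $\C^2$ in the same way on $A$ and on $B$; hence, by Lemma~\ref{lemma Esigma Dsigma}(1), both $E_{\sigma,B}$ and $E_{\sigma,A}$ are the quotient of the line $L$ by its intersection with $\Lambda_B$, resp.\ $\Lambda_A$, and the map $\pi$ between them is induced by $\mathrm{id}_L$. Under the hypothesis the two lattices $\Lambda_B\cap L$ and $\Lambda_A\cap L$ coincide, so $\pi|_{E_{\sigma,B}}$ is an isomorphism, and in particular injective.

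For the case $\tau=1$, take $x\in\ker(\pi)$. The tangent spaces at the origin of $D_{\sigma,B}$ and of $E_{\sigma,B}$ are complementary lines in $\C^2$, so $D_{\sigma,B}+E_{\sigma,B}=B$ and we may write $x=d+e$ with $d\in D_{\sigma,B}$ and $e\in E_{\sigma,B}$. Applying $\pi$ and using $\pi(x)=0$ gives $\pi(e)=-\pi(d)\in\pi(D_{\sigma,B})\subseteq D_{\sigma,A}$, while also $\pi(e)\in E_{\sigma,A}$; hence $\pi(e)\in D_{\sigma,A}\cap E_{\sigma,A}=F_{\sigma,A}$. By the surjectivity hypothesis there is $f\in F_{\sigma,B}$ with $\pi(f)=\pi(e)$, so $e-f$ lies in $E_{\sigma,B}\cap\ker(\pi)$, which is trivial by the injectivity just established. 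Therefore $e=f\in F_{\sigma,B}\subseteq D_{\sigma,B}$ and $x=d+e\in D_{\sigma,B}$. Note that both hypotheses of the proposition enter here, working in tandem.

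Finally, let $\tau\in G$ be arbitrary. Conjugation by the analytic representation of $\tau$ carries $1+\sigma+\cdots+\sigma^{r-1}$ to $1+\tau\sigma\tau^{-1}+\cdots+(\tau\sigma\tau^{-1})^{r-1}$, so $\tau(D_{\sigma,B})=D_{\tau\sigma\tau^{-1},B}$. Applying $\tau$ to the inclusion $\ker(\pi)\subseteq D_{\sigma,B}$ and using that $\ker(\pi)$ is $G$-stable gives $\ker(\pi)=\tau(\ker(\pi))\subseteq D_{\tau\sigma\tau^{-1},B}$, as desired. I expect the main subtlety to be the identification of $E_{\sigma,B}$ with $L/(\Lambda_B\cap L)$ in the second paragraph: one must know that $\Lambda_B\cap L$ is a rank-two lattice in $L$, so that the abelian subvariety $\im(1-\sigma)$ of $B$ really is the image of its tangent line. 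This is precisely what Lemma~\ref{lemma Esigma Dsigma}(1) provides (it states that $E_\sigma$ is an elliptic curve), and once it is available the remaining arguments are formal.
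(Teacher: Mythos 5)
Your proof is correct and is essentially the intended argument: the paper itself omits the proof (the \qed refers the reader to \cite[Prop.~2.4]{ALA}), and your decomposition $x=d+e$ along the complementary elliptic curves $D_{\sigma,B}$ and $E_{\sigma,B}$ --- using the lattice hypothesis to get injectivity of $\pi$ on $E_{\sigma,B}$ and the surjectivity of $F_{\sigma,B}\to F_{\sigma,A}$ to conclude $e\in F_{\sigma,B}\subseteq D_{\sigma,B}$ --- followed by translating the case $\tau=1$ via the $G$-stability of $\ker(\pi)$ and the identity $\tau(D_{\sigma,B})=D_{\tau\sigma\tau^{-1},B}$, is exactly how the cited result is proved. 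All steps check out, including the point you flag at the end (that $E_{\sigma,B}=L/(\Lambda_B\cap L)$ rests on Lemma~\ref{lemma Esigma Dsigma}).
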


Define $\Delta:=\ker(\pi)$. Since $\pi$ is $G$-equivariant, $G$ acts on $\Delta$ and hence we may consider the group $\Delta\rtimes G$. This group acts on $B$ in the obvious way: $\Delta$ acts by translations and $G$ by automorphisms. In particular, we see that the quotient $B/(\Delta\rtimes G)$ is isomorphic to $A/G$. We conclude this section by recalling a result on pseudoreflections in $\Delta\rtimes G$ (cf.~\cite[Lem.~2.5]{ALA}).

\begin{lemma}\label{main lemma}
Let $\sigma\in\Delta\rtimes G$ be a pseudoreflection. Then $\sigma=(t,\tau)$ with $\tau\in G$ a pseudoreflection and $t\in\Delta\cap E_{\tau,B}$.
\end{lemma}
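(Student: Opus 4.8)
The plan is to work directly with the definition of a pseudoreflection in $\Delta\rtimes G$. First I would write an arbitrary element of $\Delta\rtimes G$ as $\sigma=(t,\tau)$ with $t\in\Delta$ and $\tau\in G$, acting on $B$ by $x\mapsto \tau(x)+t$. The analytic representation of $\sigma$ on $T_0(B)$ is just the analytic representation of $\tau$, since translation by $t$ acts trivially on the tangent space. Therefore $\sigma$ is a pseudoreflection of $B$ if and only if its analytic representation fixes a hyperplane (here, a line) pointwise, which forces $\tau$ itself to be a pseudoreflection of $B$ (equivalently of $A$). This disposes of the first assertion.

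For the second assertion I would compute the fixed locus of $\sigma$ on $B$ and use that it must contain a curve (so that $\sigma$ genuinely fixes a divisor pointwise, not just the origin of some coset). The condition $\sigma(x)=x$ reads $\tau(x)+t=x$, i.e.\ $(1-\tau)(x)=t$. The set of solutions is non-empty precisely when $t\in\im(1-\tau)=E_{\tau,B}$, and in that case it is a coset of $\ker(1-\tau)$, whose connected component through any solution is a translate of $D_{\tau,B}$, an elliptic curve — hence indeed a divisor. So $\sigma$ fixes a curve pointwise if and only if $t\in E_{\tau,B}$. Combining with the first part, $t\in\Delta\cap E_{\tau,B}$, as claimed.

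The only point requiring a little care is the passage from ``$\sigma$ is a pseudoreflection'' to ``$t\in\im(1-\tau)$'': a priori one might worry that $\sigma$ acts as a pseudoreflection on the tangent space at some fixed point without $\sigma$ having any fixed point at all on $B$. But the Chevalley–Shephard–Todd framework recalled at the start of Section \ref{sec group actions} guarantees that a pseudoreflection in a group whose quotient is smooth does fix a divisor pointwise; since $B/(\Delta\rtimes G)\cong A/G$ is smooth, this applies, and the fixed divisor forces the solvability of $(1-\tau)(x)=t$. I do not expect any genuine obstacle here; the argument is a short direct computation essentially identical to \cite[Lem.~2.5]{ALA}, the two-dimensional case being no different from the general one.
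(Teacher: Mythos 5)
Your argument is correct and complete: the paper itself gives no proof of this lemma, quoting it directly from \cite[Lem.~2.5]{ALA}, and your direct computation (the differential of $(t,\tau)$ is that of $\tau$, forcing $\tau$ to be a pseudoreflection, and the fixed-point equation $(1-\tau)(x)=t$ being solvable exactly when $t\in\im(1-\tau)=E_{\tau,B}$) is precisely the intended argument. One small remark: your closing worry is already handled by the paper's definition of a pseudoreflection as an element fixing a divisor pointwise, so non-emptiness of the fixed locus is automatic and no appeal to Chevalley--Shephard--Todd is needed there.
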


\section{Proof of $(1)\Rightarrow (3)$}\label{sec reflectiongroup}
Assume (1), that is, we have an abelian surface $A$ with an action of a finite group $G$ that fixes the origin and such that $A/G$ is smooth and the analytic representation of $G$ is irreducible. Under these conditions, we see that $G$ is an irreducible finite complex reflection group in the sense of Shephard-Todd \cite{ST}. These groups were completely classified by Shephard and Todd in \cite{ST}. In the particular case of dimension 2, we get that $G$ is either one of 19 sporadic cases or it is isomorphic to a semidirect product $G(m,p):=H(m,p)\rtimes S_2$, where $p|m$, $m\geq 2$, and
\[H=H(m,p)=\{(\zeta_m^{a_1},\zeta_m^{a_2})\mid a_1+a_2\equiv 0\pmod p\}\subset \mu_m^2,\]
with $\zeta_m$ denoting a primitive $m$-th root of unity. The action of $S_2$ on $H$ is the obvious one. The case $G=G(2,2)$ is excluded since $G$ is then a Klein group and thus the representation is not irreducible. The action of $G$ on $\C^2$ is given as follows: $H$ acts on $\C^2$ coordinate-wise while $S_2$ premutes the coordinates.\\

Emulating the work done in \cite{ALA}, we wish to describe which of these actions actually appear on abelian surfaces and give smooth quotients. The sporadic cases were already treated in \cite{ALA} and were proven not to give a smooth quotient (cf.~\cite[\S3.3]{ALA}), so we may and will assume henceforth that $G=G(m,p)$ as above. This fixes a $G$-equivariant isomorphism of $T_0(A)$ with $\C^2$. We denote by $e_1$ and $e_2$ the canonical basis of $T_0(A)$ thus obtained.

\begin{lemma}\label{lem m leq 6}
Assume that $G$ acts on $A$ as above. Then $m\in\{2,3,4,6\}$.
\end{lemma}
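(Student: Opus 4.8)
The plan is to use the pseudoreflections in $G=G(m,p)$ together with Lemma \ref{lemma Esigma Dsigma}, part 2, which severely restricts the orders of pseudoreflections that can act on an abelian surface: a pseudoreflection of order $r$ forces the elliptic curves $D_\sigma$ and $E_\sigma$ to meet only in $2$-, $3$-, or (at most) $4$-torsion, and in particular such a $\sigma$ can only exist with $r\in\{2,3,4,6\}$. So first I would enumerate the pseudoreflections of $G(m,p)$: the permutation $S_2$ and its conjugates are order-$2$ pseudoreflections (fixing the diagonal line), while the elements of $H(m,p)$ of the form $(\zeta_m^a,1)$ or $(1,\zeta_m^a)$ are pseudoreflections fixing a coordinate axis, and these have order equal to the order of $\zeta_m^a$. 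The key observation is that $H(m,p)$ contains a pseudoreflection of order exactly $m$: if $p<m$ then $(\zeta_m,1)\in H$ (since we only need $a_1+a_2\equiv 0\bmod p$ and can take $a_2$ suitably, e.g. with $p\mid m$ we have $a_2=m-1\cdot\ldots$), wait — more carefully, since $p\mid m$, the element $(\zeta_m^p,1)$ always lies in $H(m,p)$ and has order $m/\gcd(m,p)=m/p$; and the diagonal element $(\zeta_m,\zeta_m)$ lies in $H$ iff $2\equiv 0\bmod p$. Hence I need to isolate a pseudoreflection whose order is a large divisor of $m$, and the cleanest one is $(\zeta_m^p,\zeta_m^{-p})$... — let me restructure: the honest route is that $(\zeta_m,\zeta_m^{-1})\in H(m,p)$ for every $p\mid m$ (as $1+(-1)=0$), it is a pseudoreflection of order $m$ fixing no coordinate axis but rather... no, it is diagonal, not a pseudoreflection.

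Let me state the actual argument I would carry out. The pseudoreflections of $G(m,p)$ in the $H$-part are exactly the $(\zeta_m^a,1)$ and $(1,\zeta_m^a)$ lying in $H$; for these to lie in $H(m,p)$ we need $a\equiv 0\pmod p$, so their orders are the divisors of $m/p$. Together with the transposition-type pseudoreflections (order $2$) and their products with diagonal elements, one checks that the \emph{only} constraint coming purely from pseudoreflection orders is $m/p\in\{1,2,3,4,6\}$ — which is not yet what we want. The extra input needed is that when $p<m$, the group $H(m,p)$ still contains the scalar $(\zeta_{m/p}\cdot\text{something})$... The correct extra ingredient is that the diagonal $\mu_m$ is the center, and the analytic representation restricted to this center acts by scalars $\zeta_m$; since $G$ is generated by pseudoreflections, $\zeta_m$ must be expressible through them, and a short computation with the relation $D_\sigma\cap E_\sigma\subseteq A[\text{small}]$ applied to the pseudoreflection $(\zeta_m^{m/p},1)$ or directly to a well-chosen conjugate forces $m\le 6$. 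Concretely: I would show $(1,\zeta_m)$ or an order-$m$ reflection lies in $G$ by combining the transposition with $H$-elements (conjugating $(\zeta_m^p,1)$ by $S_2$ and multiplying shows reflections of every order dividing $m$ that is attainable), reducing to the case $p=m$, i.e. $G=G(m,m)$, and there the pseudoreflections are the transpositions together with $(\zeta_m^a,\zeta_m^{-a})\cdot\tau$; analyzing $E_\sigma,D_\sigma$ for such an element and invoking part 2 of Lemma \ref{lemma Esigma Dsigma} again caps the order, yielding $m\in\{2,3,4,6\}$.

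I expect the main obstacle to be the bookkeeping for $G(m,m)$, where the pseudoreflections are not the coordinate-axis scalings but the "reflections" $(x_1,x_2)\mapsto(\zeta_m^a x_2,\zeta_m^{-a}x_1)$, each of order $2$ regardless of $m$; so the bound on $m$ there cannot come from a single pseudoreflection's order but must come from the interaction of two of them — their product is a rotation of order $m$, and demanding that this rotation be itself a product of pseudoreflections whose fixed lines are defined over $\Lambda_A$, via Lemma \ref{lemma Esigma Dsigma}, is what kills large $m$. Making that interaction argument precise — essentially a crystallographic restriction statement for the lattice $\Lambda_A\subset\C^2$ forced by having enough reflections — is the technical heart, and I would model it on the corresponding step in \cite{ALA}.
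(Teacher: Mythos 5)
The core of your proposal is left unproven, so there is a genuine gap. Your bookkeeping of pseudoreflection orders is correct as far as it goes: the reflections of $G(m,p)$ of ``transposition type'' have order $2$ for every $m$, and those in $H(m,p)$ are the $(\zeta_m^a,1)$ and $(1,\zeta_m^a)$ with $p\mid a$, so their orders only constrain $m/p$ to lie in $\{1,2,3,4,6\}$. As you yourself observe, this does not bound $m$ (e.g.\ $G(12,2)$ or $G(12,12)$ pass this test), and everything therefore hinges on the ``interaction'' step you defer at the end --- a crystallographic restriction forcing the order-$m$ rotation $\rho=(\zeta_m,\zeta_m^{-1})\in H(m,p)$ (which lies in $H$ for \emph{every} $p\mid m$, since $1+(-1)\equiv 0$) to satisfy $m\le 6$. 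You name this as ``the technical heart'' and propose to model it on \cite{ALA} without carrying it out; since this is precisely the content of the lemma, the proposal does not constitute a proof. Note also that your repeated appeals to Lemma \ref{lemma Esigma Dsigma}.2 are not quite the right tool: that statement presupposes $r\in\{2,3,4,6\}$ rather than deriving it, and in any case it says nothing about the product of two order-$2$ reflections.

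For comparison, the paper's proof is short and bypasses the reflection-order analysis entirely. It forms the \emph{trace} element $\tau_1+\tau_2$ with $\tau_1=(\zeta_m,\zeta_m^{-1})$, $\tau_2=(\zeta_m^{-1},\zeta_m)$, which acts as the scalar $\zeta_m+\zeta_m^{-1}$, exhibits the elliptic curve $E=\im\bigl((\tau_1+\tau_2)(1+\sigma)\bigr)$ (the image of the diagonal line $\C(e_1+e_2)$), and notes that $\tau_1+\tau_2$ restricts to a \emph{real} endomorphism of $E$; since $\End(E)\cap\R=\Z$, this forces $\zeta_m+\zeta_m^{-1}\in\Z$ and hence $m\in\{2,3,4,6\}$. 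This is exactly the crystallographic restriction you anticipate, but realized concretely via an elliptic curve inside $A$ rather than via an unexecuted lattice argument. If you want to complete your route in the spirit you sketch, the cleanest way is to observe that $\rho$ preserves $\Lambda_A$, so its characteristic polynomial on $\Lambda_A\otimes\Q$ is $\bigl(x^2-(\zeta_m+\zeta_m^{-1})x+1\bigr)^2\in\Z[x]$, again forcing $\zeta_m+\zeta_m^{-1}\in\Z$; but some such argument must actually be written down.
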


\begin{proof}
Let $E$ be the image of $\mathbb{C}(e_1+e_2)$ in $A$ via the exponential map. We claim that it corresponds to an elliptic curve. Indeed, consider the elements $\tau_1=(\zeta_m,\zeta_m^{-1}),\tau_2=(\zeta_m^{-1},\zeta_m)\in H$. A direct computation shows that, for $\sigma=(1\, 2)\in S_n\subset G$, $\im(\tau_1+\tau_2+\tau_1\sigma+\tau_2\sigma)=\C(e_1+e_2)$ and hence $E=(\tau_1+\tau_2)(1+\sigma)(A)$ is an elliptic curve.

We see then that the element $\tau_1+\tau_2$ induces a \emph{real} endomorphism of $E$ corresponding to multiplication by $\zeta_m+\zeta_m^{-1}\in\R$. Since real automorphisms of elliptic curves correspond to multiplication by integers, we see that $m\in\{2,3,4,6\}$ since $\zeta_m+\zeta_m^{-1}\in\Z$ if and only if $m\in\{1,2,3,4,6\}$ and here $m\geq 2$.
\end{proof}

Having proved this result, we see that there is a finite list of cases to be analyzed, that is:
\[(m,p)\in\{(2,1),(2,2),(3,1),(4,1),(6,1),(3,3),(4,2),(4,4),(6,2),(6,3),(6,6)\}.\]
Recall that we have already eliminated the case $(2,2)$ since the analytic representation of $G(2,2)$ is not irreducible. Moreover, it is well-known that there is an exceptional isomorphism of complex reflection groups between $G(4,4)$ and $G(2,1)$. We will prove then the following:
\begin{itemize}
\item If $G=G(m,1)$ and $A/G$ is smooth, then the pair $(A,G)$ corresponds to Example \ref{ex1};
\item If $G=G(3,3)$ and $A/G$ is smooth, then the pair $(A,G)$ corresponds to Example \ref{ex2};
\item If $G=G(4,2)$ and $A/G$ is smooth, then the pair $(A,G)$ corresponds to Example \ref{ex3};
\item If $G=G(6,p)$ with $p\geq 2$, then $A/G$ cannot be smooth.
\end{itemize}
In order to do this, we will construct a $G$-isogeny $B\to A$ such that the action of $G$ on $B$ is ``well-known''. Let us concentrate first in the cases where $m\neq p$. Then we obtain $B$ as follows:

Let $E_i$ be the image of $\mathbb{C}e_i$ in $A$ via the exponential map. We claim that it corresponds to an elliptic curve. Indeed, consider the non-trivial element $\tau=(\zeta_m^p,1)\in H$. Then a direct computation shows that $\im(1-\tau)=\C e_1$. This tells us that $E_1=(1-\tau)(A)$ and hence it corresponds to an elliptic curve. The same proof works for $E_2$.

Now, let $\Lambda_A$ be a lattice for $A$ in $\mathbb{C}^2$. Then $\mathbb{C}e_i\cap\Lambda_A$ corresponds to the lattice of $E_i$ in $\C=\C e_i$. We can thus define the $G$-stable sublattice of $\Lambda_A$
\[\Lambda_B:=(\C e_1\cap\Lambda_A)\oplus(\C e_2\cap\Lambda_A).\]
As in Section \ref{sec group actions}, this defines a $G$-isogeny $\pi:B\to A$. Moreover, we see that $B\simeq E_1\times E_2\simeq E^2$ and that $\pi|_{E_i}$ is an injection. Let $\Delta$ be the kernel of $\pi$. W will study the different possible quotients $A/G$ by studying the possible quotients $B/(\Delta\rtimes G)$ and thus by studying the possible $\Delta$'s. Our first result is the following:

\begin{lemma}\label{lemma Delta diagonal or hyper}
Assume that $m\neq p$. Then the coordinates of every element in $\Delta$ are invariant by $\zeta_m^p$, so in particular these elements are
\begin{itemize}
\item 2-torsion if $(m,p)\in\{(2,1),(4,1),(4,2),(6,3)\}$;
\item 3-torsion if $(m,p)\in\{(3,1),(6,2)\}$;
\item trivial if $(m,p)=(6,1)$.
\end{itemize}
\end{lemma}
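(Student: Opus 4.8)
The plan is to use Proposition~\ref{prop Delta and D} to locate $\Delta$ inside the $D_{\sigma,B}$'s associated to the pseudoreflections of $G$, and then to compute this intersection explicitly in the product coordinates on $B\simeq E_1\times E_2$. First I would fix the pseudoreflection $\tau=(\zeta_m^p,1)\in H$ already introduced, whose analytic representation is diagonal with eigenvalue $\zeta_m^p$ on $e_1$ and $1$ on $e_2$; thus $E_{\tau,B}=E_1\times\{0\}$ and $D_{\tau,B}=\{0\}\times E_2$ (the connected component of $\ker(1-\tau)$ through the origin, by Lemma~\ref{lemma Esigma Dsigma}). Similarly the conjugate $\sigma\tau\sigma^{-1}$ by the transposition $\sigma=(1\,2)$ is the pseudoreflection $(1,\zeta_m^p)$, with $D_{\sigma\tau\sigma^{-1},B}=E_1\times\{0\}$.

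Next I would check the hypotheses of Proposition~\ref{prop Delta and D} for $\tau$ (with $L=\C e_1$): by construction $\Lambda_B\cap\C e_1 = \Lambda_A\cap\C e_1$, so the condition on lattices holds; and the surjectivity $F_{\tau,B}\to F_{\tau,A}$ needs to be verified, but since $E_{\tau,B}=E_1$ maps injectively to $A$ under $\pi$ (as noted just before the lemma) and $F_{\tau,B}\subseteq E_{\tau,B}$, the map on $F_\tau$'s is injective; one then argues it is also surjective, e.g.\ because $F_{\tau,A}\subseteq E_{\tau,A}=\pi(E_1)$ and any point of $F_{\tau,A}$ lifts into $E_{\tau,B}\cap D_{\tau,B}=F_{\tau,B}$ using that $D_{\tau,B}$ surjects onto $D_{\tau,A}$. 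Granting this, Proposition~\ref{prop Delta and D} gives $\Delta\subseteq D_{\nu,B}$ for every conjugate $\nu$ of $\tau$; applying this to $\tau$ itself and to $\sigma\tau\sigma^{-1}$ yields
\[
\Delta\ \subseteq\ D_{\tau,B}\cap D_{\sigma\tau\sigma^{-1},B}\ =\ (\{0\}\times E_2)\cap(E_1\times\{0\}).
\]
Wait — that intersection is just the origin, which would be too strong; the correct reading is that $\Delta\subseteq D_{\tau,B}$ forces the \emph{first} coordinate of every element of $\Delta$ to be a point of $E_1$ fixed by $\tau$, i.e.\ killed by $1-\zeta_m^p$, and $\Delta\subseteq D_{\sigma\tau\sigma^{-1},B}$ forces the \emph{second} coordinate to be killed by $1-\zeta_m^p$ as well. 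So each coordinate of each element of $\Delta$ lies in $E_i[1-\zeta_m^p]$, i.e.\ is invariant under multiplication by $\zeta_m^p$, which is exactly the asserted conclusion. The torsion statement then follows by evaluating $1-\zeta_m^p$ for each pair: it equals $1-\zeta_2=2$ (hence $2$-torsion) for $m/p=2$, namely $(m,p)\in\{(2,1),(4,2),(6,3)\}$ and also $(4,1)$ since $\zeta_4^1$ has $\zeta_4^2=-1$ killing the $2$-torsion — more precisely one notes $E_i[1-\zeta_4]=E_i[2]$ because $(1-\zeta_4)(1+\zeta_4)=2$ and $1+\zeta_4$ is a unit times $1-\zeta_4$; similarly $E_i[1-\zeta_3]$ and $E_i[1-\zeta_6]=E_i[1-\zeta_3^{2}]$ are the $3$-torsion for $(m,p)\in\{(3,1),(6,2)\}$, using the norm of $1-\zeta_3$ being $3$; and $1-\zeta_6$ is a unit in $\Z[\zeta_6]$, so $E_i[1-\zeta_6]=0$ for $(m,p)=(6,1)$.

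The main obstacle I expect is the verification that $F_{\tau,B}\to F_{\tau,A}$ is surjective, which is the nontrivial hypothesis of Proposition~\ref{prop Delta and D}; everything else is either bookkeeping in the explicit coordinates or elementary algebraic number theory about the ideal $(1-\zeta_m^p)$ in $\Z[\zeta_m]$. For the surjectivity one has to use that the construction of $\Lambda_B$ pinched the lattice only in the diagonal directions and left $\C e_1\cap\Lambda_A$ and $\C e_2\cap\Lambda_A$ untouched, so that $\pi$ restricted to $E_1$ and to $E_2$ is an isomorphism onto its image; combined with the fact that $F_{\tau}$ is contained in $E_\tau$ and is cut out inside $E_\tau$ by the same (analytic) condition on $A$ and on $B$, one deduces that $\pi$ identifies $F_{\tau,B}$ with $F_{\tau,A}$. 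If instead a cleaner argument is available — for instance, that $\ker\pi=\Delta$ is automatically disjoint from $E_{\tau,B}$ because $\pi|_{E_1}$ is injective, which already gives $\Delta\cap E_{\tau,B}=0$ and lets one invoke a variant of the proposition — I would use that; but the coordinate computation above is robust enough to push through regardless.
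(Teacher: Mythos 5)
Your route through Proposition~\ref{prop Delta and D} does not work, for two reasons. First, the hypothesis of that proposition --- surjectivity of $F_{\tau,B}\to F_{\tau,A}$ --- actually \emph{fails} whenever $\Delta\neq 0$ in this setting: since $B=E_1\times E_2$ is a genuine product, $F_{\tau,B}=E_{\tau,B}\cap D_{\tau,B}=(E_1\times\{0\})\cap(\{0\}\times E_2)=0$, whereas $F_{\tau,A}$ is typically nonzero (e.g.\ for $G(2,1)$ with $\Delta=\{(t,t)\mid t\in E[2]\}$ one finds $F_{\tau,A}\simeq E[2]$). Your argument for surjectivity conflates two different lifts: a point of $F_{\tau,A}$ lifts to a point of $E_{\tau,B}$ and to a point of $D_{\tau,B}$, but these lifts differ by an element of $\Delta$ and need not coincide, so the point need not lift into $F_{\tau,B}$. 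Second, you yourself noticed that the proposition's actual conclusion, $\Delta\subseteq D_{\tau,B}\cap D_{\sigma\tau\sigma^{-1},B}=\{0\}$, is too strong (it would kill all the nontrivial $\Delta$'s analyzed later in the paper); your ``correct reading'' --- that membership in $D_{\tau,B}$ only constrains the first coordinate to lie in $E_1[1-\zeta_m^p]$ --- is not what the proposition says, since $D_{\tau,B}$ is by definition the \emph{connected component} of $\ker(1-\tau)$ through the origin, namely $\{0\}\times E_2$. So the step that delivers the key claim is unjustified.

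The intended argument is much more elementary and bypasses Proposition~\ref{prop Delta and D} entirely: for $\bar t=(t_1,t_2)\in\Delta$, the $G$-stability of $\Delta$ (as a subgroup) gives $(1-\tau)(\bar t)=((1-\zeta_m^p)t_1,0)\in\Delta$; but by construction of $\Lambda_B$ the restriction $\pi|_{E_1}$ is injective, so $\Delta$ contains no nonzero element of the form $(x,0)$, whence $(1-\zeta_m^p)t_1=0$, i.e.\ $t_1$ is $\zeta_m^p$-invariant (and symmetrically for $t_2$). Your final bookkeeping --- identifying $E[1-\zeta_m^p]$ with the $2$-torsion, $3$-torsion or $0$ according to the norm of $1-\zeta_m^p$ in $\Z[\zeta_m]$ --- is correct and matches the paper's ``follows immediately,'' but the heart of the lemma needs the direct computation above, not the proposition.
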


\begin{proof}
Let $\bar t=(t_1,t_2)\in\Delta$. Then, since $\Delta$ is $G$-stable, we have that, for $\tau_1=(\zeta_m^p,1)\in H$,
\[(1-\tau_1)(\bar t)=((1-\zeta_m^p)t_1,0)\in\Delta.\]
But, by construction, there are no elements of the form $(x,0)$ in $\Delta$. We deduce then that $t_1$ is $\zeta_m^p$-invariant. The same proof works for $t_2$. The assertion on the torsion of $t_1$ and $t_2$ follows immediately.
\end{proof}

Let us study now pseudoreflections in $\Delta\rtimes G$. Define the elements
\begin{align*}
\rho &:=(\zeta_m,\zeta_m^{-1})\in H\subset G;\\
\sigma &:=(1\, 2)\in S_2\subset G;\\
\tau &:=(\zeta_m^p,1)\in H\subset G.
\end{align*}
Then there are two types of pseudoreflections in $G$:\label{pseudoref in Delta G}
\begin{itemize}
\item conjugates of $\rho^a\sigma$ for $0\leq a<\frac mp$;
\item conjugates of powers of $\tau$;
\end{itemize}
and the corresponding elliptic curves in $B$ are respectively:
\begin{align*}
E_{\rho^a\sigma}&=\{(x,-\zeta_m^ax)\mid x\in E\};\\
E_{\tau}&=\{(x,0)\mid x\in E\}.
\end{align*}
Recall that elements of the form $(x,0)$ are not in $\Delta$ by construction of the isogeny $\pi:B\to A$. Using Lemmas \ref{main lemma} and \ref{lemma Delta diagonal or hyper}, we obtain immediately the following result:

\begin{lemma}\label{lem pseudoref en DeltaG}
Every pseudoreflection in $\Delta\rtimes G$ that is not in $G$ is a conjugate of $(\bar t,\rho^a\sigma)$, where $0\leq a<\frac mp$, $\bar t=(t,-\zeta_m^at)\in\Delta$ and $t$ is $\zeta_m^p$-invariant.\qed
\end{lemma}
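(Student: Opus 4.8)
The plan is to combine Lemma~\ref{main lemma} with the explicit list of pseudoreflections of $G$ and their fixed elliptic curves displayed just before the statement. Let $s\in\Delta\rtimes G$ be a pseudoreflection with $s\notin G$. By Lemma~\ref{main lemma} we may write $s=(t',\tau')$ with $\tau'\in G$ a pseudoreflection and $t'\in\Delta\cap E_{\tau',B}$; since $s\notin G$, necessarily $t'\neq 0$. So everything reduces to deciding, for each conjugacy class of pseudoreflections $\tau'$ in $G$, whether $\Delta\cap E_{\tau',B}$ can be non-trivial, and if so what its elements look like.

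First I would dispose of the case where $\tau'$ is conjugate to a power of $\tau=(\zeta_m^p,1)$. Write $\tau'=g\tau^k g^{-1}$ with $g\in G$; from $1-g\tau^k g^{-1}=g(1-\tau^k)g^{-1}$ one gets the equivariance $E_{\tau',B}=g\bigl(E_{\tau^k,B}\bigr)$. Now $\tau^k=(\zeta_m^{pk},1)$, so $\im(1-\tau^k)=\C e_1$ whenever $\zeta_m^{pk}\neq 1$, i.e. $E_{\tau^k,B}$ is the first coordinate elliptic curve $E_1=\{(x,0)\}$; and since every element of $G=H\rtimes S_2$ acts on $\C^2$ as a monomial matrix, $g(E_1)$ is either $E_1$ or $E_2=\{(0,x)\}$. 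By construction $\pi|_{E_i}$ is injective for $i=1,2$, hence $\Delta=\ker\pi$ meets neither $E_1$ nor $E_2$ non-trivially. This forces $t'=0$, contradicting $t'\neq 0$; so this case does not occur.

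It remains that $\tau'$ is conjugate to $\rho^a\sigma$ for some $0\leq a<\frac mp$, say $\tau'=g(\rho^a\sigma)g^{-1}$ with $g\in G$. Conjugating $s$ by $(0,g^{-1})\in\Delta\rtimes G$ yields $\bigl(g^{-1}(t'),\rho^a\sigma\bigr)$, and $g^{-1}(t')\in\Delta$ because $\Delta$ is $G$-stable; so after this conjugation we may assume $\tau'=\rho^a\sigma$ exactly. Then $t'\in\Delta\cap E_{\rho^a\sigma,B}=\Delta\cap\{(x,-\zeta_m^ax):x\in E\}$, so $t'=(t,-\zeta_m^at)$ for some $t\in E$, and Lemma~\ref{lemma Delta diagonal or hyper} forces $t$ to be $\zeta_m^p$-invariant. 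This is exactly the claimed normal form, so $s$ is a conjugate of such a $(\bar t,\rho^a\sigma)$.

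I do not anticipate a genuine obstacle here: the argument is essentially bookkeeping over the two types of pseudoreflections of $G$. The one point deserving care is the equivariance identity $E_{g\tau g^{-1},B}=g(E_{\tau,B})$ together with the fact that an element of $H\rtimes S_2$ carries a coordinate axis to a coordinate axis — this is precisely what lets the $\tau$-type case be reduced to the already-recorded injectivity of $\pi$ on $E_1$ and $E_2$.
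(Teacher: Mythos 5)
Your proof is correct and follows exactly the route the paper intends: the paper states this lemma with no written proof (``we obtain immediately\dots''), and your argument is precisely the intended combination of Lemma~\ref{main lemma}, the displayed list of pseudoreflections of $G$ with their curves $E_{\tau,B}$, Lemma~\ref{lemma Delta diagonal or hyper}, and the fact that $\Delta$ meets the coordinate curves $E_1,E_2$ trivially by construction of $\pi$. The only details you add beyond the paper's hints --- the equivariance $E_{g\tau g^{-1},B}=g(E_{\tau,B})$ and the monomial-matrix observation for disposing of the $\tau$-type case --- are exactly the right bookkeeping.
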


With these considerations, we can start a case by case study of the non-trivial $\Delta$'s. We recall that the main tool will be the Chevalley-Shephard-Todd Theorem, which states that $A/G=B/\Delta\rtimes G$ is smooth if and only if the stabilizer in $\Delta\rtimes G$ of each point in $B$ is generated by pseudoreflections.

\subsection{The case $G=G(2,1)$}
By Lemma \ref{lemma Delta diagonal or hyper}, we know that $\Delta$ is 2-torsion. Since we also know that there are no elements of the form $(t,0)$ for $t\in E$, we get the following possible options for $\Delta$:

\begin{enumerate}
\item $\Delta=\{0\}$;
\item $\Delta=\langle(t,t)\rangle$ with $t\in E[2]$;
\item $\Delta=\{(t,t)\mid t\in E[2]\}$;
\item $\Delta=\{(0,0),(t_1,t_2),(t_2,t_1),(t_1+t_2,t_1+t_2)\}$ with $t_1,t_2\in E[2]$, $t_1\neq t_2$.\\
\end{enumerate}

Case (1) clearly corresponds to Example \ref{ex1} (which gives a smooth quotient, cf.~\cite[Prop.~3.4]{ALA}). Case (2) cannot give a smooth quotient and this follows directly from \cite[Prop.~3.7]{ALA}.\footnote{The proof of this proposition only uses two variables and thus it works in dimension 2 as well.}\\

In case (3), we claim that the pair $(A,G)$ is isomorphic to the pair $(B,G)$. This will reduce us to the case with trivial $\Delta$, which was already dealt with. To prove the claim, consider the canonical basis of $T_0(A)=T_0(B)=\C^2$. Then the analytic representation of $G$ is given by the following values in its generators:
\[\rho_a((1,-1))=\begin{pmatrix} 1 & 0 \\ 0 & -1 \end{pmatrix},\quad  \rho_a((1\,2))=\begin{pmatrix} 0 & 1 \\ 1 & 0 \end{pmatrix}
\]
Now, with this basis and this $\Delta$, we can view the $G$-isogeny $B\to A$ as the morphism $E^2\to E^2$ given by the following matrix:
\begin{equation}\label{matrizG21}
M=\begin{pmatrix}\tag{*}
1 & 1 \\
1 & -1
\end{pmatrix},
\end{equation}
for which one can check that its kernel is precisely the elements in $\Delta$. In order to prove that the pairs $(A,G)$ and $(B,G)$ are isomorphic, it suffices thus to prove that the image of this representation of $G$ under conjugation by $M$ is $G$ once again. Direct computations give:
\begin{align*}
M\rho_a((1,-1))M^{-1}&=\begin{pmatrix} 0 & 1 \\ 1 & 0 \end{pmatrix}=\rho_a((1\,2)),\\
M\rho_a((1\,2))M^{-1}&=\begin{pmatrix} 1 & 0 \\ 0 & -1 \end{pmatrix}=\rho_a((1,-1)).
\end{align*}
And these clearly generate the same group $G$. \\

In case (4), consider the element $\bar t = (t_1',t_2')$ where $2t_i' = t_i$. Note that $G$ cannot fix $\bar t$ as $t_1'$ and $t_2'$ lie in different orbits by the action of $\mu_2$. Now, it is easy to see that there is no way the action of $\Delta$ can compensate the action of $G$ except in the case when we add the element $(t_1,t_2)$. A direct computation tells us then that the only element fixing $\bar t$ is $((t_1, t_2),(-1,-1)) \in \Delta \rtimes G$ and since this stabilizer is not generated by pseudoreflections by Lemma \ref{lem pseudoref en DeltaG}, we see that $A/G$ is not smooth.

\subsection{The case $G=G(4,2)$}\label{section G42}
Since $G(4,2)$ contains $G(2,1)$, we may start from the precedent list of possible non-trivial $\Delta$'s. However, these must also be stable by the new element $(i,i)\in H(4,2)$ (where $i=\zeta_4$). Defining by $t_0$ the only non-trivial $i$-invariant element in $E$, we get the following possibilities:

\begin{enumerate}
\item $\Delta=\{0\}$;
\item $\Delta=\langle(t_0,t_0)\rangle$;
\item $\Delta=\{(t,t)\mid t\in E[2]\}$;
\item $\Delta=\{(0,0),(t,t+t_0),(t+t_0,t),(t_0,t_0)\}$ with $t\in E[2]$, $t\neq t_0$.
\end{enumerate}

Case (1) does not give a smooth quotient $A/G$, cf.~\cite[Prop.~3.4]{ALA}. Case (2) corresponds to Example \ref{ex3} (and it actually gives a smooth quotient $A/G$ as we prove in section \ref{sec ex3}). Indeed, the $G$-isogeny $B\to A$ corresponds in this case to the morphism $E^2\to E^2$ with $E=\C/\Z[i]$ given by the matrix
\[\begin{pmatrix}
1 & -1 \\
0 & i-1
\end{pmatrix},\]
and the generators given in Example \ref{ex3} correspond to the conjugates by this matrix of the following matrices:
\[\left\{\begin{pmatrix} -1 & 0 \\ 0 & 1\end{pmatrix}\right.,\, \begin{pmatrix} -i & 0 \\ 0 & i\end{pmatrix},\, \left.\begin{pmatrix} 0 & 1 \\ 1 & 0\end{pmatrix} \right\}.\]
But these are clearly the matrix expressions of the generators $(-1,1),(-i,i)\in H$ and $(1\, 2)\in S_2$ of $G=H\rtimes S_2$.\\

In cases (3) and (4), we claim that the pair $(A,G)$ is isomorphic to the pair $(B,G)$. This will reduce us to the case with trivial $\Delta$, which was already dealt with. To prove the claim, we consider as for $G=G(2,1)$ the canonical basis of $T_0(A)=T_0(B)=\C^2$. Then the analytic representation of $G$ is given by the following values in its generators:
\[\rho_a((i,-i))=\begin{pmatrix} i & 0 \\ 0 & -i \end{pmatrix},\quad \rho_a((-1,1))=\begin{pmatrix} -1 & 0 \\ 0 & 1 \end{pmatrix},\quad  \rho_a((1\,2))=\begin{pmatrix} 0 & 1 \\ 1 & 0 \end{pmatrix}
\]
Now, with this basis and the $\Delta$ from case (2), we already know that $B\to A$ looks like $E^2\to E^2$ with matrix $M$ from \eqref{matrizG21}. It suffices to check then that the new generator $\rho_a((i,-i))$ falls into $\rho_a(G)$ after conjugation by $M$. And indeed we have that $M\rho_a((i,-i))M^{-1}=\rho_a((i,i))\rho_a((1\, 2))$.

With the $\Delta$ from case (3), the corresponding matrix for $B\to A$ is:
\[N=\begin{pmatrix}
1 & i \\
i & 1
\end{pmatrix}.\]
And once again, direct computations give:
\begin{align*}
N\rho_a((i,-i))N^{-1}&=\begin{pmatrix} 0 & -1 \\ 1 & 0 \end{pmatrix}=\rho_a((-1,1))\rho_a((1\, 2)),\\
N\rho_a((-1,1))N^{-1} &=\begin{pmatrix} 0 & -i \\ i & 0 \end{pmatrix}=\rho_a((1\,2))\rho_a((i,-i)),
\\
N\rho_a((1\,2))N^{-1}&=\begin{pmatrix} 0 & 1 \\ 1 & 0 \end{pmatrix}=\rho_a((1\,2)).
\end{align*}
And these clearly generate the same group $G$.

\subsection{The case $G=G(4,1)$}
Since $G(4,1)$ contains $G(4,2)$, we may start from the precedent list of possible non-trivial $\Delta$'s. Now, by Lemma \ref{lemma Delta diagonal or hyper}, we know that the coordinates of the elements in $\Delta$ are $i$-invariant. We get then that there are only two options for $\Delta$, that is the trivial case and $\Delta=\langle(t_0,t_0)\rangle$.

In the trivial case, we immediately see that $(A,G)$ corresponds to Example \ref{ex1}. Assume then that $\Delta$ is non-trivial and consider the element $(s,t)\in B$ with $s\in E[2]$, $s$ \emph{not} $i$-invariant and $2t=t_0$. Since clearly these elements have different order, we see that the orbits of these elements by the action of $\langle t_0\rangle\times\mu_4$ are different. Thus no action of an element in $\Delta\times H\subset\Delta\rtimes G$ can compensate the action of $(1\,2)\in G$ in order to fix $(s,t)$. In other words, the stabilizer of $\bar t$ must be contained in $\Delta\times H$. It is easy to see then that it corresponds to $\langle((t_0,t_0),(i,-1))\rangle$. By Lemma \ref{lem pseudoref en DeltaG}, this stabilizer is not generated by pseudoreflections and hence $A/G$ is not smooth in this case.

\subsection{The case $G=G(3,1)$}
By Lemma \ref{lemma Delta diagonal or hyper}, we know that the coordinates of the elements in $\Delta$ are $\zeta_3$-invariant. Now, there are only two such non-trivial elements that we will denote by $s_0$ and $-s_0$. Since we also know that there are no elements of the form $(t,0)$ for $t\in E$, we get the following possible options for a non-trivial $\Delta$:

\begin{enumerate}
\item $\Delta=\{0\}$;
\item $\Delta=\langle(s_0,s_0)\rangle$;
\item $\Delta=\langle(s_0,-s_0)\rangle$.
\end{enumerate}

We immediately see that the trivial case gives us Example \ref{ex1}. In case (2), Lemma \ref{lem pseudoref en DeltaG} tells us that the only pseudoreflections in $\Delta\rtimes G$ are those coming from $G$. In particular, in order to prove that $A/G$ cannot be smooth, it suffices to exhibit an element in $B$ such that its stabilizer in $\Delta\rtimes G$ has elements that are not in $G$. Now, by \cite[Lem.~2.8]{ALA} we know that there exists an element $\tau\in G$ such that $1-\tau$ is surjective. Then there exists an element $\bar z\in B$ such that $\bar z-\tau(\bar z)=(s_0,s_0)$. This implies that $((s_0,s_0),\tau)\in\Delta\rtimes G$ stabilizes $z$, proving thus that $A/G$ is not smooth in this case.\\

In case (3), consider the element $\bar s=(s,-s)\in B$ with $s\in E[3]$ and $s$ \emph{not} $\zeta_3$-invariant. Note that $\langle s_0\rangle\times\mu_3$ acts on $E[3]$ and a direct computation tells us that the orbit of $s$ is $\{s,s+s_0,s-s_0\}$. In particular, we see that $s$ and $-s$ lie in different orbits for this action. The same argument used in the case of $G(4,1)$ tells us then that the stabilizer of $\bar s$ must be contained in $\Delta\times H$. It is easy to see then that, up to changing $\bar s$ by $-\bar s$, it corresponds to $\langle((s_0,-s_0),(\zeta_3,\zeta_3))\rangle$. Since this stabilizer is not generated by pseudoreflections by Lemma \ref{lem pseudoref en DeltaG}, we see that $A/G$ is not smooth in this case as well.

\subsection{The case $G=G(6,1)$}
By Lemma \ref{lemma Delta diagonal or hyper}, we know that the only possibility is a trivial $\Delta$. This clearly corresponds to Example \ref{ex1}.

\subsection{The case $G=G(6,2)$}
Since $G(6,2)$ contains $G(3,1)$, we may start from the possible non-trivial $\Delta$'s for that case. Note that these are all 3-torsion subgroups. Thus, if $\bar x\in B$ denotes a 2-torsion element, we see that its stabilizer in $\Delta\rtimes G$ can only contain elements in $G$. Consider then the element $\bar t=(t,0)$ where $t$ is a non-trivial 2-torsion element. As it is proven in \cite[Prop.~3.4]{ALA}, the stabilizer of this element in $G$ is not generated by pseudoreflections. This implies that $A/G=B/(\Delta\rtimes G)$ cannot be smooth regardless of the choice of possible $\Delta$.

\subsection{The case $G=G(6,3)$}
Since $G(6,3)$ contains $G(2,1)$, we may start from the possible non-trivial $\Delta$'s for that case. Note that these are all 2-torsion subgroups. Thus, like we noticed in the previous case, if $\bar x\in B$ denotes a 3-torsion element, its stabilizer in $\Delta\rtimes G$ only contains elements in $G$. Consider then the element $\bar s=(s_0,0)$ where $s_0$ is a $\zeta_3$-invariant element (hence 3-torsion). Once again, as proven in \cite[Prop.~3.4]{ALA}, the stabilizer of this element in $G$ is not generated by pseudoreflections, which implies that $A/G$ cannot be smooth in any case of $\Delta$.\\

This finishes the study of the cases where $m\neq p$. We are left thus with the cases $G(3,3)$ and $G(6,6)$. In these particular cases we forget all the constructions done before and start from scratch.

\subsection{The case $G=G(3,3)$}
The group $G(3,3)$ is easily seen to be isomorphic \emph{as a complex reflection group} to $S_3$ acting on $\C^2$ via the standard representation. As such, it has already been treated by the first two authors in \cite[\S3.1]{ALA} and we know that in that case we get a smooth quotient if and only if we are in Example \ref{ex2}.

\subsection{The case $G=G(6,6)$}
Note that $G(6,6)$ is isomorphic to the direct product $G(3,3)\times\{\pm 1\}$. Since the actions of $S_3$ and $\mu_2=\{\pm 1\}$ commute, we may follow the approach taken by \cite{ALA} for $S_3$ and we will prove the following:

\begin{proposition}
Let $G(6,6)=S_3\times\mu_2$ act on an abelian surface $A$ in such a way that its action on $T_0(A)$ is the standard one for $S_3$ and the obvious one for $\mu_2$. Then $A/G$ is not smooth.
\end{proposition}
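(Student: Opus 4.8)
The strategy is to mimic the treatment of $G=G(3,3)$ just carried out, adding the extra involution coming from the $\mu_2$-factor and then exhibiting a point whose stabilizer cannot be generated by pseudoreflections. Concretely, write $G=S_3\times\mu_2$ acting on $T_0(A)=\C^2$ where $S_3$ acts by the standard $2$-dimensional representation and $\mu_2=\{\pm1\}$ by the scalars. The pseudoreflections of $G$ fall into two types: the three transpositions in $S_3\times\{1\}$ (these fix a line pointwise), and the products of a transposition with $-1$ in $S_3\times\{-1\}$ (the order-$6$ elements $-\tau$ for $\tau$ a transposition, which also fix a line pointwise since $-\tau$ has eigenvalues $1,-1$). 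The element $-1\in\mu_2$ itself is \emph{not} a pseudoreflection (it has eigenvalue $-1$ with multiplicity $2$), and neither are the $3$-cycles nor the $3$-cycles times $-1$. So the pseudoreflection subgroup is all of $G$, but the point is that $-1$ lies in the center and is not a product of pseudoreflections fixing a common point unless forced to be.

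First I would pass, exactly as in \cite{ALA} for $S_3$, to a convenient $G$-isogeny. Following the $G(3,3)$ reduction, one realizes $A$ (up to $G$-isogeny, hence without changing $A/G$) as a subvariety of $E^3$ of the form $\{(x_1,x_2,x_3): x_1+x_2+x_3=0\}$ on which $S_3$ permutes coordinates; the extra $\mu_2$ then acts as $(x_1,x_2,x_3)\mapsto(-x_1,-x_2,-x_3)$. One must also analyze the possible kernels $\Delta$ of this $G$-isogeny as in the $m\neq p$ cases, using that $\Delta$ is $G$-stable; since $-1\in G$, every element of $\Delta$ is $(-1)$-invariant, hence $2$-torsion, and one lists the finitely many $G$-stable $2$-torsion subgroups of $E^3\cap\{\sum x_i=0\}$ just as before. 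For each such $\Delta$ I would then produce the obstruction point.

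The key step is to exhibit, for each admissible $\Delta$, a point $\bar x\in B$ whose stabilizer in $\Delta\rtimes G$ contains an element that is not a pseudoreflection and is not a product of pseudoreflections all fixing $\bar x$. The natural candidate is a point fixed by the central involution: take $\bar x$ with $2\bar x$ equal to a suitable torsion point so that $-\bar x=\bar x+\delta$ for $\delta\in\Delta$ (or, in the trivial-$\Delta$ case, a $2$-torsion point $\bar x$ so that $-1$ fixes $\bar x$ outright). Then $-1$ (twisted by a translation in $\Delta$ if needed) stabilizes $\bar x$. One checks that the full stabilizer of such an $\bar x$ is generated by this central element together with possibly some transpositions-type elements, and that the central element itself is not a product of the pseudoreflections in that stabilizer: the pseudoreflections available fix only lines through $\bar x$, and their products generated inside the stabilizer form a reflection group whose order cannot account for the central $-1$ unless $\bar x$ is a very special point, which one rules out by a direct orbit computation as was done for $G(4,1)$ and $G(3,1)$ above. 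This forces the stabilizer to fail the Chevalley--Shephard--Todd criterion, so $A/G$ is not smooth.

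The main obstacle I anticipate is the bookkeeping of the possible $\Delta$'s together with checking, for each, that the candidate point $\bar x$ genuinely has the claimed stabilizer — in particular ensuring that no combination of translations from $\Delta$ conspires with a $3$-cycle (which is not a pseudoreflection) to enlarge or alter the stabilizer in a way that accidentally makes it reflection-generated. This is the same kind of delicate orbit analysis that appeared in the $G(4,1)$ and $G(3,1)$ cases, and here it must be done in the slightly larger group $S_3\times\mu_2$; once the stabilizer is pinned down, the non-smoothness follows immediately from Lemma \ref{main lemma} and the Chevalley--Shephard--Todd Theorem.
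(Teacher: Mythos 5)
Your overall framework agrees with the paper's: realize $A$ up to $G$-isogeny as $B=\{(x_1,x_2,x_3)\in E^3: \sum x_i=0\}$ with $S_3$ permuting coordinates and $\mu_2$ acting by $-1$, and then test the Chevalley--Shephard--Todd criterion for $\Delta\rtimes G$ acting on $B$. Your idea for the trivial-$\Delta$ case also works (a $2$-torsion point of $B$ not fixed by any transposition nor any $-\tau$ has stabilizer $\{\pm1\}$, which is not reflection-generated); the paper instead argues that $B/S_3\simeq\bb P^2$ and that no $\mu_2$-quotient of $\bb P^2$ is smooth.

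However, there is a genuine gap in your determination of the possible kernels $\Delta$, and it is fatal to the case analysis. You assert that ``since $-1\in G$, every element of $\Delta$ is $(-1)$-invariant, hence $2$-torsion.'' This confuses $G$-\emph{stability} of $\Delta$ (i.e.\ $-\delta\in\Delta$, which is automatic for any subgroup) with elementwise fixing. The analogous deduction in Lemma \ref{lemma Delta diagonal or hyper} does not come from stability alone: it uses that $(1-\tau)(\bar t)$ lands in $\Delta$ \emph{and} has the special form $(x,0)$, which is excluded by the construction of $\Lambda_B$ as a direct sum. Applying $1-(-1)=2$ here only yields $2\bar t\in\Delta$, which carries no information. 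In fact the correct constraint, obtained in the paper via Proposition \ref{prop Delta and D} (after checking surjectivity of $F_{\sigma,B}\to F_{\sigma,A}$), is that $\Delta$ lies in the common fixed locus of all transpositions, i.e.\ $\Delta\subseteq\{(x,x,x): 3x=0\}$ --- so $\Delta$ is $3$-torsion, not $2$-torsion, of order $1$, $3$ or $9$. Your plan would therefore enumerate the wrong subgroups and never meet the two genuinely hard cases: for $|\Delta|=3$ one must exhibit the point $\bar x=(x,x+t,x-t)$ with $x\in E[3]\setminus\langle t\rangle$, whose stabilizer contains $(\bar t,(1\,2\,3))$ and no pseudoreflections outside $G$ (ruled out via Lemma \ref{main lemma}); and for $|\Delta|=9$ the quotient is \emph{not} obstructed by any point --- instead one shows $(A,G)\simeq(B,G)$ by an explicit matrix conjugation and reduces to the trivial case. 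Without the correct list of $\Delta$'s, neither of these steps appears, so the argument as proposed does not establish the proposition.
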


\begin{proof}
Let $\sigma=(1\, 2)\in S_3$ and $E=E_{\sigma}$ be induced by a line $L_\sigma\subseteq T_0(A)$, and define the lattice
\[\Lambda_B:=\sum_{\tau\in S_{3}}\tau(L_\sigma\cap\Lambda_A).\]
Since clearly all lattices are $\mu_2$-invariant, this gives us a $G$-invariant sublattice of $\Lambda_A$. Therefore, we get a $G$-equivariant isogeny $\pi:B\to A$ with kernel $\Delta$. Applying this construction to Example \ref{ex2}, to which we can naturally add the action of $\mu_2$ in order to get an action of $G$, we see that it gives the whole lattice. We can thus see $B$ as 
\[B=\{(x_1,x_2,x_3)\in E^3\mid x_1+x_2+x_3=0\},\]
where $S_3$ and $\mu_2$ act in their respective natural ways. Using the notations from Section \ref{sec group actions}, we see by inspection that $F_{\sigma,B}=E_{\sigma,B}[2]\simeq E[2]$, hence the map $\pi:F_{\sigma,B}\to F_{\sigma,A}$ is surjective since by Lemma \ref{lemma Esigma Dsigma}.2 we have $F_{\sigma,A}\subset E_{\sigma,A}[2]\simeq E[2]$. By Proposition \ref{prop Delta and D}, we have that $\Delta$ is contained in the fixed locus of all the conjugates of $\sigma$, which clearly generate $S_3$. Thus, $\Delta$ consists of elements of the form $(x,x,x)\in E^{3}$ such that $3x=0$. In particular, $\Delta$ is isomorphic to a subgroup of $E[3]$ and hence of order 1, 3 or 9.\\

Assume that $\Delta$ is trivial, that is, that $A=B$. Then the action of $G=S_3\times\mu_2$ on $B\simeq E^2$ induces an action of $\mu_2$ on $B/S_3\simeq\bb P^2$ (recall that the action of $S_3$ on $B$ is that of Example \ref{ex2}). We only need to notice then that any quotient of $\bb P^2$ by a non trivial action of the group $\mu_2$ is not smooth. This is well-known.\\

Assume now that $\Delta$ has order 3 and let $\bar t=(t,t,t)\in\Delta$ be a non-trivial element (thus $t\in E[3]$). Let $x\in E[3]$ be a non-trivial element different from $\pm t$ and consider $\bar x=(x,x+t,x-t)$. It is then easy to see that the element $(\bar t,(1\,2\,3))\in\Delta\rtimes G$ fixes $\bar x$ and that $\stab_G(\bar x)=\{1\}$, so that every pseudoreflection fixing $\bar x$ must lie outside $G$. Let $(\bar s,\sigma)$ be such a pseudoreflection. Using Lemma \ref{main lemma}, we see that $\sigma\in \{-(1\,2),-(2\,3),-(1\,3)\}$, where $-\tau$ denotes $(\tau,-1)\in S_3\times\mu_2=G$. Now, for any such $\sigma$, direct computations tell us that $(\bar s,\sigma)$ fixes $\bar x$ if and only $\bar s=(s,s,s)$ with $s=a_\sigma x+b_\sigma t$ for some $a_\sigma\neq 0$. Since $x\not\in\langle t\rangle\subset E[3]$, we see that $\bar s\not\in\Delta$ and hence these pseudoreflections do not exist. We get then that $\stab_{\Delta\rtimes G}(\bar x)$ is not generated by pseudoreflections and hence $A/G$ cannot be smooth.\\

Assume finally that $\Delta$ has order 9. We claim that in this case the pair $(A,G)$ is isomorphic to the pair $(B,G)$. This will reduce us to the case with trivial $\Delta$, which was already dealt with. To prove the claim, fix the basis $\{(1,0,-1),(0,1,-1)\}$ of $T_0(B)=T_0(A)\subset\C^3$. Then the analytic representation of $G$ is given by the following values in its generators:
\[\rho_a((1\,2))=\begin{pmatrix} 0 & 1 \\ 1 & 0 \end{pmatrix},\qquad \rho_a(-1)=\begin{pmatrix} -1 & 0 \\ 0 & -1 \end{pmatrix},\qquad  \rho_a((1\,2\, 3))=\begin{pmatrix} -1 & -1 \\ 1 & 0 \end{pmatrix}
\]
Now, with this basis and this $\Delta$, the analytic representation of $B\to A$ is given by the inverse of the following matrix:
\[M=\begin{pmatrix}
-1 & -2 \\
2 & 1
\end{pmatrix}.\]
Indeed, this corresponds to the morphism that sends $(x,y,-x-y)\in B\subset E^3$ to $(-x-2y,2x+y,-x+y)\in A\subset E^3$ and thus its kernel is precisely the elements of the form $(x,x,x)\in E[3]^3\subset B$, that is, $\Delta$. In order to prove that the pairs $(A,G)$ and $(B,G)$ are isomorphic, it suffices thus to prove that the image of this representation of $G$ under conjugation by $M$ is $G$ once again. Direct computations give:
\begin{gather*}
M\rho_a(-1)M^{-1}=\rho_a(-1),\qquad M\rho_a((1\,2\, 3))M^{-1}=\rho_a((1\,2\,3)),\\
M\rho_a((1\,2))M^{-1}=\begin{pmatrix} 0 & -1 \\ -1 & 0 \end{pmatrix}=\rho_a((1\,2))\rho_a(-1).
\end{gather*}
And these clearly generate the same group $G$.
\end{proof}

\section{Proof of $(3)\Rightarrow (2)$}\label{sec ex3}
The only case left to prove in this last implication is Example \ref{ex3} (the other two are proved in \cite{Auff}). Let us study then this example in detail.\\

Recall that in section \ref{section G42} we proved that the pair $(A,G)$ from Example \ref{ex3} can be obtained as follows. Let $G=G(4,2)$ and let $B=E^2$ with $E=\C^2/\Z[i]$. Denote by $t_0$ the $i$-invariant element in $E$ and denote by $q_0$ the quotient morphism $E\to E/\langle t_0\rangle\simeq E$. Then $A=B/\Delta$ with $\Delta=\langle(t_0,t_0)\rangle\in E^2=B$ and the action of $G$ on $A$ is the one induced by $B\to A$.

Note now that $G$ has an index 2 subgroup $G_1:=G(2,1)=H_1\rtimes S_2$, which is thus normal in $G$ (here, $H_1=\{\pm 1\}^2$). Moreover, the pair $(B,G_1)$ corresponds to Example \ref{ex1}, so that $B/G_1\simeq\bb P^2$. Finally, note that $\Delta$ is an order 2 subgroup of $B$ and thus $G$ acts \emph{trivially} on it. In particular, the actions of $G$ and $\Delta$ on $B$ commute and hence we have a commutative diagram of Galois covers
\[\xymatrix{
B \ar[r]^{\Delta} \ar[d]^{G_1} \ar@/_1.5pc/[dd]_{G} & A \ar[d] \ar@/^1.5pc/[dd]^{G} \\
\bb P^2 \ar[r] \ar[d]^{G/G_1} & A/G_1 \ar[d] \\
B/G \ar[r] & A/G,
}\]
where parallel arrows have the same Galois group. Since $\Delta$ and $G/G_1$ have both order 2, we see then that $A/G$ is a quotient of $\bb P^2$ by the action of a Klein group.

\begin{proposition}\label{prop ex3 smooth}
The quotient $A/G$ is isomorphic to $\mathbb{P}^2$.
\end{proposition}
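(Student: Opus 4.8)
The plan is to argue in two stages: first to prove that $A/G$ is smooth, and then to identify it with $\bb P^2$ by computing its Betti numbers and invoking the classification of projective surfaces. The first stage will be the real work; the second is formal.

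For the first stage, recall that $A/G = B/(\Delta\rtimes G)$ with $B = E^2$, $G = G(4,2)$ and $\Delta = \langle (t_0,t_0)\rangle$, so by the Chevalley--Shephard--Todd theorem it is enough to check that the stabilizer in $\Delta\rtimes G$ of each point of $B$ is generated by pseudoreflections. Lemma~\ref{lem pseudoref en DeltaG} tells us exactly which elements of $\Delta\rtimes G$ are pseudoreflections, so the criterion can only fail at a point fixed by some non-pseudoreflection; and the non-pseudoreflections of $\Delta\rtimes G$ that admit a fixed point — the order-$4$ elements $(i,\pm i)\in G$, the central involution $(-1,-1)$, and their $\Delta$-translates — have fixed loci consisting of finitely many torsion points of order $2$ or $4$. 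One is thus reduced to a finite list of "dangerous" points $b$; for each, I would compute $\stab_{\Delta\rtimes G}(b)$ directly, working in $E = \C/\Z[i]$ and using that $t_0 = \tfrac{1+i}{2}$ satisfies $\zeta t_0 = t_0$ for all $\zeta\in\mu_4$ and $(1-i)t_0 = 0$, and check that it is generated by the pseudoreflections it contains — in practice $\stab_{\Delta\rtimes G}(b)$ turns out to be strictly larger than the cyclic group generated by the offending element, and to contain enough reflections. This elementary but delicate case check is where I expect the main difficulty to lie; it is the analogue for Example~\ref{ex3} of the stabilizer computations carried out in \S3 of \cite{ALA}.

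For the second stage, once $A/G$ is known to be a smooth projective surface one has $H^\ast(A/G,\Q)\cong H^\ast(A,\Q)^G$. Writing $V$ for the analytic representation of $G$ (irreducible and nontrivial), we get $H^1(A,\C)\cong V^\ast\oplus\overline{V^\ast}$, which has no $G$-invariants, so $b_1(A/G) = 0$. For the second cohomology, $H^2(A,\C)\cong\wedge^2 V^\ast\oplus (V^\ast\otimes\overline{V^\ast})\oplus\wedge^2\overline{V^\ast}$: the two one-dimensional summands are the characters $(\det V)^{\mp 1}$, which are nontrivial because $\det V$ is a nontrivial character of $G$ (the generator coming from $S_2$ acts with determinant $-1$), while $(V^\ast\otimes\overline{V^\ast})^G\cong\End_G(V)\cong\C$ by Schur's lemma (using $\overline{V^\ast}\cong V$, valid since $G$ is finite); hence $b_2(A/G) = 1$. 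Finally, the finite surjective morphism $A\to A/G$ forces $\kappa(A/G)\le\kappa(A) = 0$. A smooth projective surface with $b_2 = 1$ is minimal, and no minimal surface of Kodaira dimension $0$ (abelian, bielliptic, K3, Enriques) has $b_1 = 0$ and $b_2 = 1$; therefore $\kappa(A/G) = -\infty$, so $A/G$ is rational, and the only rational surface with $b_2 = 1$ is $\bb P^2$.

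As an alternative to the cohomological step one could instead exploit the diagram directly: $A/G\to\bb P^2 = B/G_1$ is a $(\Z/2)^2$-cover, and after determining its branch data in $\bb P^2$ — the images of the reflection curves of $G_1$, a configuration of two lines and a conic — the theory of abelian covers recovers both the smoothness of $A/G$ and the fact that it is $\bb P^2$. I would expect the argument through $H^\ast(A,\Q)^G$ to be shorter to write, since pinning down the branch data requires following the isogenies of the diagram rather carefully.
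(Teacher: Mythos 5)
There is a genuine gap: the smoothness of $A/G$, which is the entire content of the proposition, is not actually proved. Your first stage correctly reduces to checking that $\stab_{\Delta\rtimes G}(b)$ is generated by pseudoreflections for the finitely many points $b$ fixed by some non-pseudoreflection, but then you only assert that ``in practice $\stab_{\Delta\rtimes G}(b)$ turns out to be strictly larger than the cyclic group generated by the offending element, and to contain enough reflections.'' That assertion \emph{is} the proposition; without the explicit stabilizer computations the argument proves nothing. Moreover, your enumeration of the dangerous elements is incomplete: besides $(i,\pm i)$, $(-1,-1)$ and their $\Delta$-translates, the coset $H(4,2)\sigma$ contains non-reflections with isolated fixed points, such as $(1,-1)\sigma$ and $(i,i)\sigma$ (both with eigenvalues $\pm i$), and these and their $\Delta$-translates must also be accounted for. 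Your second stage (computing $b_1=0$, $b_2=1$ from $H^\ast(A,\Q)^G$ and then running through the Enriques classification) is correct and would indeed finish the proof \emph{once smoothness is known}, and it is a genuinely different endgame from the paper's; but it cannot substitute for the missing first stage.

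For comparison, the paper avoids the stabilizer analysis altogether: it exhibits $A/G$ as a normal $(\Z/2\Z)^2$-cover of $B/G_1\simeq\bb P^2$, computes the branch locus explicitly to be \emph{three lines in general position} (not two lines and a conic, as you guess in your closing paragraph), and then invokes Pardini's theory \cite{Pardini}: since $\pic(\bb P^2)$ is torsion-free, such a cover is determined by its branch components, so $A/G$ must coincide with the standard cover $[x:y:z]\mapsto[x^2:y^2:z^2]$. This yields smoothness and the isomorphism $A/G\simeq\bb P^2$ simultaneously. If you want to complete your route instead, you must carry out the finite but delicate stabilizer check for all non-reflection elements of $\Delta\rtimes G$ with fixed points, including those in the coset $H(4,2)\sigma$ and their $\Delta$-translates.
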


This proposition finishes the proof of $(3)\Rightarrow(2)$ in Theorem \ref{classification}.

\begin{rem}
This example was already known to Tokunaga and Yoshida (cf.~\cite[\S5, Table II]{TY}). However, in order to prove that $A/G\simeq\bb P^2$, they cite an article by Shvartsman which contains no proofs (cf.~\cite{Shvartsman}).
\end{rem}

\begin{proof}
Since $A/G$ is a quotient of $\bb P^2$ by the action of a Klein group $K$, the only thing we need to check is that this action gives $\bb P^2$ as a quotient. Note first that the action is faithful since it comes from the faithful action of $G\times\Delta$ on $B$. Consider then $K$ as a subgroup of $\mathrm{PGL}_3=\aut(\bb P^2)$ and let $K_1$ be its preimage in $\mathrm{SL}_3$. This is an order 12 group and hence any $2$-Sylow subgroup of $K_1$ gives a lift of $K$ to a subgroup of $\mathrm{GL}_3$. This implies that the action lifts to $\C^3$ and it can thus be seen as a linear representation of $K$. Since there are exactly four irreducible representations of $K$, all of dimension 1, a direct check tells us that any choice of three different representations gives the same faithful action on $\bb P^2$ up to conjugation, whereas any other choice gives a non-faithful action. We may assume then that the nontrivial elements $x_i\in K$ for $i=1,2,3$ act on $\bb P^2$ via the diagonal matrices with $1$ on the $i$-th coordinate and $-1$ elsewhere. The quotient of $\bb P^2$ by such a group is the weighted projective space $\bb P(2,2,2)$, which is well-known to be isomorphic to $\bb P(1,1,1)=\bb P^2$. This concludes the proof.
\end{proof}

\end{document}